\theoremstyle{plain}
\newtheorem{theorem}{Theorem}[section]
\newtheorem{proposition}[theorem]{Proposition}
\newtheorem{lemma}[theorem]{Lemma}
\newtheorem{corollary}[theorem]{Corollary}
\theoremstyle{definition}
\newtheorem{example}[theorem]{Example}
\newtheorem{notation}[theorem]{Notation}
\begin{document}
\def\N{\mathbb{N}}
\def\Z{\mathbb{Z}}
\def\R{\mathbb{R}}
\def\C{\mathbb{C}}
\def\w{\omega}

\title[Homogeneous Polynomials: Harmonic Means and Weighted Geometric Means]{
Homogeneous Polynomials: Harmonic Means and Completely Partitioned Weighted Geometric Means}
\author{C. Schwanke}
\address{Department of Mathematics and Applied Mathematics, University of Pretoria, Private Bag X20, Hatfield 0028, South Africa and Unit for BMI, North-West University, Private Bag X6001, Potchefstroom, 2520, South Africa}
\email{cmschwanke26@gmail.com}
\date{\today}
\subjclass[2020]{46A40}
\keywords{vector lattice, orthogonally additive polynomial, harmonic mean, weighted geometric mean}

\begin{abstract}
We provide two new characterizations of bounded orthogonally additive polynomials from a uniformly complete vector lattice into a convex bornological space using harmonic means and completely partitioned weighted geometric means. Our result involving completely partitioned weighted geometric means generalizes a recent theorem on bounded orthogonally additive polynomials by Z.A. Kusraeva as well as parts of related theorems by G. Buskes and the author.
\end{abstract}

\maketitle
\section{Introduction}\label{S:intro}

For $r,s\in\N$, the $s$th root mean power $\mathfrak{S}_s$ and the $s$th geometric mean $\mathfrak{G}_s$ are given by
\[
\mathfrak{S}_s(x_1,\dots,x_r)=\sqrt[s]{\sum_{k=1}^{r}x_k^s}\quad (x_1,\dots,x_r\in\mathbb{R})
\]
and
\[
\mathfrak{G}_{s}(x_1,\dots,x_s)=\sqrt[s]{\prod_{k=1}^{s}|x_k|}\quad (x_1,\dots,x_s\in\mathbb{R}).
\]

These means have enjoyed extensive study in the setting of Archimedean vector lattices recently, see e.g. \cite{Az, AzBoBus, BusSch, BusSch4, Kusa, Sch2}. Indeed, these means can be defined in uniformly complete vector lattices using the Archimedean vector lattice functional calculus developed in \cite{BusdPvR}. Of particular interest to this paper, it is proven in \cite{Kusa} that if (i) $E$ is a uniformly complete vector lattice, (ii) $Y$ is a convex bornological space, and (iii) $P\colon E\to Y$ is a bounded orthogonally additive $s$-homogeneous polynomial with unique corresponding symmetric $s$-linear map $\check{P}$, then the following hold:
\begin{equation}\label{eq:RMP}
	P(\mathfrak{S}_{s}(f_{1},\dots,f_{r}))=\sum_{k=1}^rP(f_{k})\quad (f_{1},\dots,f_{r}\in E_+, r\in\mathbb{N}\setminus\{1\})
\end{equation}
and
\begin{equation}\label{eq:GM}
	P(\mathfrak{G}_{s}(f_{1},\dots,f_{s}))=\check{P}(f_{1},\dots,f_{s})\quad (f_{1},\dots,f_{s}\in E_+).
\end{equation}

The intimate relationship between means and orthogonally additive polynomials was further explored in \cite[Theorems 2.3\&2.4]{BusSch4} and \cite[Theorem~2.3]{Sch2}, where it was shown that the identities \eqref{eq:RMP} and \eqref{eq:GM} in fact characterize bounded orthogonally additive polynomials $P\colon E\to Y$.

The results mentioned above illustrate how the root mean power and geometric mean play an intriguing role in vector lattice theory. It is perhaps puzzling then why another well-known mean, the harmonic mean, has yet to be explored in this setting. This paper aims to further illustrate the interrelation between means and orthogonally additive polynomials by investigating the harmonic mean as well as certain weighted versions of the geometric mean.

Let $s\in\N$. Given $x_{1},\dots,x_{s}\in\mathbb{R}$, the $s$th harmonic mean $\eta_s$ is defined as
\[
\eta_s(x_1,\dots,x_s)=\begin{cases}
	\dfrac{s\prod\limits_{i=1}^{s}|x_i|}{\sum_{j=1}^{s}\left(\dfrac{1}{|x_j|}\prod\limits_{i=1}^{s}|x_i|\right)} \quad & x_1,\dots,x_s\neq 0 \\
	0 \quad & \text{else}  \\
\end{cases}.
\]
Given $p\in\N$, $x_{1},\dots,x_{p}\in\mathbb{R}$, and $t_{1},\dots,t_{p}\in(0,1)$ such that $\sum\limits_{k=1}^{p}t_{k}=1$, the weighted geometric mean $\gamma_{t_{1},\dots,t_{p}}$ is given by
\[
\gamma_{t_{1},\dots,t_{p}}(x_{1},\dots,x_{p})=\prod\limits_{k=1}^{p}|x_{k}|^{t_{k}}.
\]
Next let $p,s\in\N$ with $p\leq s$. By a \textit{completely partitioned weighted geometric mean}, we mean a weighted geometric mean of the form $\gamma_{r_{1}/s,\dots,r_{p}/s}$, where $(r_1,r_2,\dots,r_p)$ is a complete partition of $s$. This means that
\begin{itemize}
	\item[(i)] $r_1,\dots,r_p\in\N$,
	\item[(ii)] $\sum_{k=1}^pr_k=s$, and
	\item[(iii)] for every $q\in\{1,\dots,s\}$, there exists $\alpha_k\in\{0,1\}\ (k=1,\dots p)$ such that
	\[
	q=\sum_{k=1}^{p}\alpha_kr_k,
	\]
\end{itemize}
see \cite[Definition~2.2]{Park}.

\begin{example}
Given $s\in\N$, the geometric mean $\mathfrak{G}_s$ is a completely partitioned weighted geometric mean. For $s\geq 2$ and $p=s-1$, the weighted geometric mean $\gamma_{r_{1}/s,\dots,r_{p}/s}$ such that $r_1=2$ and $r_k=1$ for all $k\in\{2,\dots p\}$ is also a completely partitioned weighted geometric mean.
\end{example}

As continuous and positively homogeneous functions, these harmonic and completely partitioned weighted geometric means are defined in any uniformly complete Archimedean vector lattice $E$ using the Archimedean vector lattice functional calculus, see \cite{BusdPvR}. Moreover, we have the following explicit formula for completely partitioned weighted geometric means of elements in $E_+$ (which coincides with the functional calculus definition):
\[
\gamma_{r_{1}/s,\dots,r_{p}/s}(f_{1},\dots,f_{p})=\frac{1}{s}\inf\Bigl\{\sum\limits_{k=1}^{p}r_{k}\theta_{k}f_{k}:\theta_{k}\in(0,\infty),\ \prod\limits_{k=1}^{p}\theta_{k}^{r_{k}/s}=1\Bigr\}\ (f_{1},\dots,f_{p}\in E_+).
\]
This formula is obtained from \cite[Theorem~3.7]{BusSch}, which follows from the fact that the classical weighted geometric means are concave on $\mathbb{R}^p_+$ and are thus the infimum of their tangents. A more general version of this formula can also be found in \cite[Section~4]{BusSch3}.

Noting that the classical harmonic mean is concave on $\mathbb{R}^s_+$, it also follows from \cite[Theorem~3.7]{BusSch} that
\[
\eta_s(f_1,\dots,f_s)=s\inf\left\{\sum_{k=1}^sa_kf_k\ :\ 0\leq a_1,\dots,a_s\leq 1,\ \sum_{k=1}^s\sqrt{a_k}=1\right\}\ (f_{1},\dots,f_{s}\in E_+).
\]

Turning to orthogonally additive polynomials on vector lattices, we note that all vector spaces in this manuscript are real, and all vector lattices are Archimedean. For any unexplained terminology, notation, or basic theory regarding vector lattices, we refer the reader to the standard texts, e.g. \cite{AB,LuxZan1,Zan2}.

Let $E$ be a uniformly complete vector lattice, let $V$ be a vector space, and let $s\in\mathbb{N}$. Recall that a map $P\colon E\to V$ is called an $s$-\textit{homogeneous polynomial} if there exists a (unique) symmetric $s$-linear map $\check{P}\colon E^s\to V$ such that $P(f)=\check{P}(f,\dots,f)\ (f\in E)$. (We denote the symmetric $s$-linear map associated with an $s$-homogeneous polynomial $P$ by $\check{P}$ throughout.) Recall that an $s$-homogeneous polynomial $P\colon E\to V$ is said to be \textit{orthogonally additive} if
\[
P(f+g)=P(f)+P(g)
\]
holds whenever $f,g\in E$ are disjoint. We will also say that $P$ is \textit{positively orthogonally additive} if $P(f+g)=P(f)+P(g)$ holds whenever $f,g\in E_+$ are disjoint.

\section{Main Results}\label{S:MR}

We begin this section with the following proposition regarding the harmonic mean, which will aid our proof of Theorem~\ref{T:HM}.

\begin{proposition}\label{P:Schur}
Let $E$ be a uniformly complete vector lattice, and put $s\in\mathbb{N}$. Then
\[
\bigwedge\limits_{k=1}^sf_k\leq\eta_s(f_1,\dots,f_s)\leq s\bigwedge\limits_{k=1}^sf_k
\]
holds for all $f_1,\dots,f_s\in E_+$.
\end{proposition}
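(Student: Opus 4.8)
The plan is to prove both inequalities directly inside $E$ by exploiting the explicit infimum formula for $\eta_s$ recorded just above, so that no passage to a scalar representation is required. Write
$L:=\inf\bigl\{\sum_{k=1}^{s}a_kf_k:\ 0\le a_1,\dots,a_s\le 1,\ \sum_{k=1}^{s}\sqrt{a_k}=1\bigr\}$,
an element that exists in $E$ precisely because $\eta_s(f_1,\dots,f_s)=sL$ by \cite[Theorem~3.7]{BusSch}. Dividing the desired chain by $s$, the statement reduces to the two order inequalities $\tfrac{1}{s}\bigwedge_{k=1}^{s}f_k\le L\le\bigwedge_{k=1}^{s}f_k$, and I will obtain the right-hand one by testing the infimum against a special feasible tuple and the left-hand one by producing an explicit lower bound for the defining set.

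For the upper bound I would use the ``vertex'' choices. Fix $k_0\in\{1,\dots,s\}$ and set $a_{k_0}=1$ and $a_k=0$ for $k\ne k_0$; this tuple is feasible since $\sum_{k=1}^{s}\sqrt{a_k}=1$, and the corresponding element of the defining set is exactly $f_{k_0}$. Hence $L\le f_{k_0}$ for every $k_0$, and taking the infimum over $k_0$ yields $L\le\bigwedge_{k=1}^{s}f_k$, that is, $\eta_s(f_1,\dots,f_s)\le s\bigwedge_{k=1}^{s}f_k$. For the lower bound I would instead show that $\tfrac{1}{s}\bigwedge_{j=1}^{s}f_j$ is a lower bound for the whole defining set. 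For any feasible tuple $(a_k)$, the relations $f_k\ge\bigwedge_{j=1}^{s}f_j$ and $a_k\ge 0$ give $\sum_{k=1}^{s}a_kf_k\ge\bigl(\sum_{k=1}^{s}a_k\bigr)\bigwedge_{j=1}^{s}f_j$, while the constraint forces $\sum_{k=1}^{s}a_k\ge\tfrac{1}{s}$ via the Cauchy--Schwarz inequality $1=\sum_{k=1}^{s}\sqrt{a_k}\le\sqrt{s}\,\bigl(\sum_{k=1}^{s}a_k\bigr)^{1/2}$. Thus $\sum_{k=1}^{s}a_kf_k\ge\tfrac{1}{s}\bigwedge_{j=1}^{s}f_j$ for every feasible tuple, so $\tfrac{1}{s}\bigwedge_{j=1}^{s}f_j\le L$ and therefore $\bigwedge_{j=1}^{s}f_j\le\eta_s(f_1,\dots,f_s)$.

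The only genuinely nonformal ingredient is the scalar estimate $\sum_{k=1}^{s}a_k\ge\tfrac{1}{s}$ under the constraint $\sum_{k=1}^{s}\sqrt{a_k}=1$, which is exactly where Cauchy--Schwarz (equivalently a power-mean comparison) enters; the remaining manipulations are order-theoretic properties of an infimum that is known \emph{a priori} to exist in $E$. I expect the main points requiring care to be the correct use of ``every lower bound of a set is dominated by its infimum'' (used for the lower inequality) versus ``the infimum is dominated by each member of the set'' (used for the upper inequality), together with verifying feasibility of the chosen tuples. I note that an alternative route bypasses the infimum formula entirely: since $\eta_s$, $\min$, and $s\min$ are continuous and positively homogeneous, and the classical inequalities $\min\le\eta_s\le s\min$ hold pointwise on $\mathbb{R}^{s}_{+}$ (with both sides vanishing whenever a coordinate is $0$), the order-preserving functional calculus of \cite{BusdPvR} transfers them verbatim to $E_+$.
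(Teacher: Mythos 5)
Your proof is correct and takes essentially the same route as the paper: both rest on the explicit infimum formula from \cite[Theorem~3.7]{BusSch}, obtain the upper bound by testing the infimum against the vertex tuples $a_{k_0}=1$, $a_k=0$ ($k\neq k_0$), and reduce the lower bound to the scalar estimate $\sum_{k=1}^{s}a_k\geq 1/s$ on the constraint set. The only cosmetic difference is that you derive this estimate via Cauchy--Schwarz, whereas the paper encodes it in the identity $\eta_s(1,\dots,1)=1$ together with the same infimum formula; the two are interchangeable.
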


\begin{proof}
Let $f_1,\dots,f_s\in E_+$, and let $i\in\lbrace 1,\dots,s\rbrace$ be arbitrary. Then we have

\begin{align*}
\bigwedge\limits_{k=1}^sf_k&=\eta_s(1,\dots,1)\left(\bigwedge\limits_{k=1}^sf_k\right)\\
&=s\inf\left\{\sum_{k=1}^sa_k\cdot 1\ :\ 0\leq a_1,\dots,a_s\leq 1,\ \sum_{k=1}^s\sqrt{a_k}=1\right\}\left(\bigwedge\limits_{k=1}^sf_k\right)\\
&=s\inf\left\{\sum_{k=1}^sa_k\left(\bigwedge\limits_{k=1}^sf_k\right)\ :\ 0\leq a_1,\dots,a_s\leq 1,\ \sum_{k=1}^s\sqrt{a_k}=1\right\}\\
&\leq s\inf\left\{\sum_{k=1}^sa_kf_k\ :\ 0\leq a_1,\dots,a_s\leq 1,\ \sum_{k=1}^s\sqrt{a_k}=1\right\}\\
&=\eta_s(f_1,\dots,f_s)\\
&\leq sf_i,
\end{align*}
where the last inequality follows from taking $a_i=1$ and $a_j=0$ for all $j\in\{1,\dots,s\}\setminus\{i\}$ in the last infimum above. We conclude that
\[
\bigwedge\limits_{k=1}^sf_k\leq\eta_s(f_1,\dots,f_s)\leq s\bigwedge\limits_{k=1}^sf_k.
\]
\end{proof}

We next present a corollary which immediately follows from Proposition~\ref{P:Schur}.

\begin{corollary}\label{C:ortho}
Let $E$ be a uniformly complete vector lattice, and put $s\in\mathbb{N}$. Then
\[
\eta_s(f_1,\dots,f_s)=0
\]
holds whenever $f_1,\dots,f_s\in E_+$ and $f_i\perp f_j$ for some $i,j\in\{1,\dots,s\}$.
\end{corollary}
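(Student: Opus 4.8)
The plan is to deduce the corollary directly from Proposition~\ref{P:Schur}, exploiting the sandwich inequality it provides together with the elementary fact that disjointness of two of the arguments forces their infimum—and hence the infimum of all the arguments—to vanish.

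First I would fix $f_1,\dots,f_s\in E_+$ and suppose $f_i\perp f_j$ for some indices $i,j\in\{1,\dots,s\}$. By definition of disjointness, $f_i\wedge f_j=0$. Since each $f_k\geq 0$, the full meet satisfies $0\leq\bigwedge_{k=1}^s f_k\leq f_i\wedge f_j=0$, so $\bigwedge_{k=1}^s f_k=0$. This is the only genuine computational content, and it is immediate from the lattice operations.

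Next I would invoke Proposition~\ref{P:Schur}, which gives
\[
0=\bigwedge\limits_{k=1}^s f_k\leq\eta_s(f_1,\dots,f_s)\leq s\bigwedge\limits_{k=1}^s f_k=s\cdot 0=0.
\]
Squeezing between two copies of $0$ forces $\eta_s(f_1,\dots,f_s)=0$, which is exactly the claim.

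I do not anticipate a real obstacle here, since the proposition does all the heavy lifting; the corollary is essentially a one-line consequence. The only point requiring the slightest care is justifying that the pairwise meet $f_i\wedge f_j=0$ controls the $s$-fold meet, but this follows at once from monotonicity of $\wedge$ and positivity of the $f_k$. Thus the structure is: reduce disjointness to the vanishing of the full infimum, then apply the sandwich from Proposition~\ref{P:Schur}.
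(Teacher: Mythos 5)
Your proof is correct and is precisely the argument the paper intends: the corollary is stated as an immediate consequence of Proposition~\ref{P:Schur}, obtained exactly as you do by noting that $f_i\perp f_j$ forces $\bigwedge_{k=1}^s f_k=0$ and then squeezing $\eta_s(f_1,\dots,f_s)$ between $0$ and $s\cdot 0$.
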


\begin{notation}
In the theorem below as well as its proof, we at times for brevity will write 
\[
\eta(f_i)_{i=1}^s:=\eta_s(f_{1},\dots,f_{s}).
\]
We also denote the vector lattice $s$-power of $E$ by $(E^{\textcircled{s}},\textcircled{s})$, see \cite{BoBus}.
\end{notation}

We next present the following theorem which provides a relationship between bounded orthogonally additive polynomials and the harmonic mean.

\begin{theorem}\label{T:HM}
Let $s\in\mathbb{N}\setminus\{1\}$. Suppose $E$ is a uniformly complete vector lattice, $Y$ is a convex bornological space, and $P\colon E\to Y$ is a bounded $s$-homogeneous polynomial. Then $P$ is orthogonally additive if and only if 
\[
\check{P}(f_{1},\dots,f_{s})=
\]
\[
\frac{1}{s}\Bigl(\check{P}\bigl(\eta(f_i)_{i=1}^s,f_2,\dots,f_s\bigr)+\check{P}\bigl(f_{1},\eta(f_i)_{i=1}^s,f_3,\dots,f_s\bigr)+\cdots+\check{P}\bigl(f_{1},\dots,f_{s-1},\eta(f_i)_{i=1}^s\bigr)\Bigr)
\]
holds for every $f_{1},\dots,f_{s}\in E_{+}$.
\end{theorem}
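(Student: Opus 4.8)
The plan is to prove the two implications separately, using for the forward direction the linearization of bounded orthogonally additive polynomials through the vector lattice $s$-power $(E^{\textcircled{s}},\textcircled{s})$, and for the converse Corollary~\ref{C:ortho}. Throughout I write $\eta:=\eta_s(f_1,\dots,f_s)$ and, for $h\in E_+$, abbreviate $h^{\textcircled{s}}:=h\textcircled{s}\cdots\textcircled{s}h$ ($s$ factors). The crux of the whole argument is the single lattice identity
\begin{equation}\label{eq:star}
\frac{1}{s}\sum_{j=1}^{s}f_{1}\textcircled{s}\cdots\textcircled{s}\underbrace{\eta}_{j}\textcircled{s}\cdots\textcircled{s}f_{s}=f_{1}\textcircled{s}\cdots\textcircled{s}f_{s}\qquad(f_{1},\dots,f_{s}\in E_{+}),
\end{equation}
where in the $j$th summand the factor in the $j$th slot is replaced by $\eta$.

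I would establish \eqref{eq:star} by functional calculus. Since $\textcircled{s}$ is a lattice morphism, $f_1\textcircled{s}\cdots\textcircled{s}f_s=\mathfrak{G}_s(f_1,\dots,f_s)^{\textcircled{s}}$ for $f_1,\dots,f_s\in E_+$, so with $h_j:=\mathfrak{G}_s(f_1,\dots,\eta,\dots,f_s)$ (the geometric mean of the $j$th modified tuple) and $h:=\mathfrak{G}_s(f_1,\dots,f_s)$, identity \eqref{eq:star} reads $\tfrac1s\sum_{j}h_j^{\textcircled{s}}=h^{\textcircled{s}}$. All of $h_1,\dots,h_s,h$ lie in the uniformly complete sublattice generated by $f_1,\dots,f_s$; representing this sublattice together with its power via the Archimedean functional calculus, the $\textcircled{s}$-power of a positive element corresponds to the $s$th power of its representing function, and \eqref{eq:star} reduces to the pointwise scalar identity
\[
\frac{1}{s}\sum_{j=1}^{s}\eta_s(x)\prod_{i\neq j}x_i=\prod_{i=1}^{s}x_i\qquad(x_1,\dots,x_s\geq 0).
\]
For $x_1,\dots,x_s>0$ this is immediate from $\sum_{j}1/x_j=s/\eta_s(x)$, and the boundary case follows by continuity since both sides vanish when some $x_j=0$.

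For the forward implication, assume $P$ is orthogonally additive. Then $\check P$ is orthosymmetric, so by the universal property of the power \cite{BoBus} there is a bounded linear operator $S\colon E^{\textcircled{s}}\to Y$ with $\check P(g_1,\dots,g_s)=S(g_1\textcircled{s}\cdots\textcircled{s}g_s)$ for all $g_1,\dots,g_s\in E$. Applying $S$ to \eqref{eq:star} and using its linearity, the right-hand side of the asserted identity becomes
\[
\frac{1}{s}\sum_{j=1}^{s}S\bigl(f_{1}\textcircled{s}\cdots\textcircled{s}\eta\textcircled{s}\cdots\textcircled{s}f_{s}\bigr)=S\Bigl(\frac{1}{s}\sum_{j=1}^{s}f_{1}\textcircled{s}\cdots\textcircled{s}\eta\textcircled{s}\cdots\textcircled{s}f_{s}\Bigr)=S(f_1\textcircled{s}\cdots\textcircled{s}f_s)=\check P(f_1,\dots,f_s),
\]
which is exactly the claimed equation. (Equivalently, one may phrase this through \eqref{eq:GM}, the operator $S$ being what makes the finite sum compatible with $P$, which is itself not linear.)

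For the converse, suppose the displayed identity holds for all $f_1,\dots,f_s\in E_+$, and take $f_1,\dots,f_s\in E_+$ with $f_a\perp f_b$ for some $a\neq b$. By Corollary~\ref{C:ortho} we have $\eta=\eta_s(f_1,\dots,f_s)=0$, so each summand on the right-hand side is a value of $\check P$ with a zero argument and hence vanishes; the identity then forces $\check P(f_1,\dots,f_s)=0$. Thus $\check P$ annihilates every positive tuple containing a disjoint pair, and by multilinearity, splitting each argument into positive and negative parts, the same holds for arbitrary disjoint pairs in $E$. Expanding $P(f+g)=\check P(f+g,\dots,f+g)$ by symmetry and multilinearity for disjoint $f,g\in E$, every mixed term carries the disjoint pair $f,g$ and therefore vanishes, leaving $P(f+g)=P(f)+P(g)$; hence $P$ is orthogonally additive. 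I expect the main obstacle to be the rigorous justification of \eqref{eq:star}, namely the compatibility of the functional calculus on $E$ with that on the power $E^{\textcircled{s}}$ that licenses the reduction to the scalar identity; once \eqref{eq:star} is in hand, both the forward algebra and the converse are routine.
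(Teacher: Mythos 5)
Your proof is correct, and its skeleton coincides with the paper's: your key identity is exactly what the paper establishes in Steps 1--3 of its proof, in the form $s\textcircled{s}(f_1,\dots,f_s)=\textcircled{s}\bigl(\eta,f_2,\dots,f_s\bigr)+\cdots+\textcircled{s}\bigl(f_1,\dots,f_{s-1},\eta\bigr)$, and the forward implication is completed identically by applying a bounded linear factorization $\check{P}=S\circ\textcircled{s}$. Three differences are worth recording. (i) The paper proves the key identity by passing to the universal completion $E^u$: it forms the $f$-subalgebra $C$ generated by $f_1,\dots,f_s$ and $\eta$, uses that real-valued multiplicative lattice homomorphisms separate the points of $C$, and checks the same scalar identity you reduce to, namely $\eta_s(x)\sum_{j}\prod_{i\neq j}x_i=s\prod_{i}x_i$; the transfer to $E^{\textcircled{s}}$ is then effected by the representation $i\colon E^{\textcircled{s}}\to F\subseteq E^u$ of \cite[Theorem~4.1]{BoBus}, under which $\textcircled{s}(x_1,\dots,x_s)$ corresponds to the $f$-algebra product $x_1\cdots x_s$. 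This is precisely the tool that closes the compatibility gap you flag at the end: one never compares two functional calculi, one simply computes inside the single $f$-algebra $E^u$, where both your geometric-mean identity and the harmonic-mean identity hold by homomorphism separation. (ii) Your attribution of the factorization to the universal property in \cite{BoBus} is imprecise in this setting: with $Y$ a convex bornological space and $P$ merely bounded, the existence of the bounded linear operator $S$ (and the orthosymmetry of $\check{P}$) is Kusraeva's linearization, and the paper accordingly cites the proof of \cite[Theorem~4]{Kusa2}. (iii) Your converse is genuinely more self-contained than the paper's: the paper derives only positive orthogonal additivity (vanishing of the mixed terms $\check{P}(f,\dots,f,g,\dots,g)$ for disjoint positive $f,g$, then the binomial theorem) and invokes \cite[Theorem~2.3]{Sch2} to upgrade to orthogonal additivity, whereas your observation that the hypothesis kills $\check{P}$ on every positive tuple containing a disjoint pair, extended to arbitrary signs by splitting into positive and negative parts (since $h_a\perp h_b$ forces $h_a^{\pm}\perp h_b^{\pm}$), yields full orthogonal additivity directly and removes that external dependence.
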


\begin{proof}
First suppose that $P$ is orthogonally additive. Let $E^u$ denote the universal completion of $E$, and denote the $f$-algebra multiplication on $E^u$ by juxtaposition. Suppose that $f_1,\dots f_s\in E_+$.

\bigskip
\noindent
\textbf{Step 1.} Given $j\in\{1,...,s\}$, we denote the product
\[
f_1f_2\cdots f_{j-1}f_{j+1}\cdots f_{s-1}f_s
\]
by
\[f_1\cdots\bar{f}_j\cdots f_s
\]
for short. We claim that in $E^u$ we have
\[
\eta_s(f_1,\dots,f_s)\sum_{j=1}^{s}(f_1\cdots\bar{f}_j\cdots f_s)=s(f_1\cdots f_s).
\]
To verify this claim, let $C$ be the Archimedean $f$-subalgebra of $E^u$ generated by
\[
\left\{f_1,\dots,f_s,\eta_s(f_1,\dots,f_s)\right\}.
\]
Suppose that $\omega\colon C\rightarrow\mathbb{R}$ is a nonzero multiplicative vector lattice homomorphism. Using \cite[Theorem~3.7]{BusSch} in the second and fourth equalities below, we obtain
\begin{align*}
	\omega&\left(\eta_s(f_1,\dots,f_s)\sum_{j=1}^{s}(f_1\cdots\bar{f}_j\cdots f_s)\right)=\omega\left(\eta(f_1,\dots,f_s)\right)\omega\left(\sum_{j=1}^{s}(f_1\cdots\bar{f}_j\cdots f_s)\right)\\
	&=\eta_s\bigl(\omega(f_1),\dots,\omega(f_s)\bigr)\sum_{j=1}^{s}\Bigl(\omega(f_1)\cdots\overline{\omega(f_j)}\cdots\omega(f_s)\Bigr)\\
	&=s(\omega(f_1)\cdots\omega(f_s))\\
	&=\omega\bigl(s(f_1\cdots f_s)\bigr).
\end{align*}
Since the set of all nonzero multiplicative vector lattice homomorphisms $\omega\colon C\rightarrow\mathbb{R}$ separates the points of $C$ (see \cite[Corollary 2.7]{BusdPvR}), we have
\[
\eta_s(f_1,\dots,f_s)\sum_{j=1}^{s}(f_1\cdots\bar{f}_j\cdots f_s)=s(f_1\cdots f_s),
\]
as claimed.

\bigskip
\noindent
\textbf{Step 2.} Notice that in $E^u$ we have the elementary identity
\[
\eta_s(f_1,\dots,f_s)\sum_{j=1}^{s}(f_1\cdots\bar{f}_j\cdots f_s)=
\]
\[
(\eta(f_i)_{i=1}^s f_2\cdots f_s)+(f_{1}\eta(f_i)_{i=1}^s f_3\cdots f_s)+\cdots+(f_{1}\cdots f_{s-1}\eta(f_i)_{i=1}^s).
\]

\bigskip
\noindent
\textbf{Step 3.} Arguing as in \cite{Kusa}, we assert that
\[
s\textcircled{s}(f_{1},\dots,f_{s})=
\]
\[
\textcircled{s}\bigl(\eta(f_i)_{i=1}^s,f_2,\dots,f_s\bigr)+\textcircled{s}\bigl(f_{1},\eta(f_i)_{i=1}^s,f_3,\dots,f_s\bigr)+\cdots+\textcircled{s}\bigl(f_{1},\dots,f_{s-1},\eta(f_i)_{i=1}^s\bigr).
\]

Indeed, by \cite[Theorem~4.1]{BoBus}, there exists a uniformly complete vector sublattice $F$ of $E^u$ and a vector lattice isomorphism $i\colon E^{\textcircled{s}}\to F$ such that both
\[
x_1\cdots x_s\in F
\]
and
\[
i\circ\textcircled{s}(x_1,\dots,x_s)=x_1\cdots x_s
\]
hold for all $x_1,\dots,x_s\in E$.

It thus follows from Step 1 and Step 2 that 
\begin{align*}
&s\textcircled{s}(f_{1},\dots,f_{s})=i^{-1}\Bigl(s(f_1\cdots f_s)\Bigr)\\
&=i^{-1}\left(\eta_s(f_1,\dots,f_s)\sum_{j=1}^{s}(f_1\cdots\bar{f}_j\cdots f_s)\right)\\
&=i^{-1}\Bigl(\bigl(\eta(f_i)_{i=1}^sf_2\cdots f_s\bigr)+\bigl(f_{1}\eta(f_i)_{i=1}^s f_3\cdots f_s\bigr)+\cdots+\bigl(f_{1}\cdots f_{s-1}\eta(f_i)_{i=1}^s\bigr)\Bigr)\\
&=i^{-1}\bigl(\eta(f_i)_{i=1}^sf_2\cdots f_s\bigr)+i^{-1}\bigl(f_{1}\eta(f_i)_{i=1}^s f_3\cdots f_s\bigr)+\cdots+i^{-1}\bigl(f_{1}\cdots f_{s-1}\eta(f_i)_{i=1}^s\bigr)\\
&=\textcircled{s}\bigl(\eta(f_i)_{i=1}^s,f_2,\dots,f_s\bigr)+\textcircled{s}\bigl(f_{1},\eta(f_i)_{i=1}^s,f_3,\dots,f_s\bigr)+\cdots+\textcircled{s}\bigl(f_{1},\dots,f_{s-1},\eta(f_i)_{i=1}^s\bigr).
\end{align*}

\bigskip
\noindent
\textbf{Step 4.} By the proof of \cite[Theorem~4]{Kusa2}, there exists a bounded linear operator $S\colon E^{\textcircled{s}}\to Y$ for which
\[
\check{P}(x_1,\dots,x_s)=S\textcircled{s}(x_1,\dots,x_s)
\]
holds for every $x_1,\dots,x_s\in E$. It thus follows from Step 3 that
\begin{align*}
&s\check{P}(f_1,\dots,f_s)=sS\textcircled{s}(f_1,\dots,f_s)\\
&=S\Bigl(s\textcircled{s}(f_1,\dots,f_s)\Bigr)\\
&=S\Bigl(\textcircled{s}\bigl(\eta(f_i)_{i=1}^s,f_2,\dots,f_s\bigr)+\textcircled{s}\bigl(f_{1},\eta(f_i)_{i=1}^s,f_3,\dots,f_s\bigr)+\dots+\textcircled{s}\bigl(f_{1},\dots,f_{s-1},\eta(f_i)_{i=1}^s\bigr)\Bigr)\\
&=S\textcircled{s}\bigl(\eta(f_i)_{i=1}^s,f_2,\dots,f_s\bigr)+S\textcircled{s}\bigl(f_{1},\eta(f_i)_{i=1}^s,f_3,\dots,f_s\bigr)+\cdots+S\textcircled{s}\bigl(f_{1},\dots,f_{s-1},\eta(f_i)_{i=1}^s\bigr)\\
&=\check{P}\bigl(\eta(f_i)_{i=1}^s,f_2,\dots,f_s\bigr)+\check{P}\bigl(f_{1},\eta(f_i)_{i=1}^s,f_3,\dots,f_s\bigr)+\cdots+\check{P}\bigl(f_{1},\dots,f_{s-1},\eta(f_i)_{i=1}^s\bigr).
\end{align*}

Thus the first implication is proved.

\bigskip
\noindent
\textbf{Step 5.} To prove the converse, suppose that
\[
\check{P}(f_{1},\dots,f_{s})=
\]
\[
\frac{1}{s}\Bigl(\check{P}\bigl(\eta(f_i)_{i=1}^s,f_2,\dots,f_s\bigr)+\check{P}\bigl(f_{1},\eta(f_i)_{i=1}^s,f_3,\dots,f_s\bigr)+\cdots+\check{P}\bigl(f_{1},\dots,f_{s-1},\eta(f_i)_{i=1}^s,f_s\bigr)\Bigr)
\]
holds for  every $f_{1},\dots,f_{s}\in E_+$. Let $f,g\in E_+$ with $f\perp g$. It follows immediately from our assumption and Corollary~\ref{C:ortho} that for any $k\in\{1,...,s-1\}$ we have
\[
\check{P}(\underbrace{f,\dots,f}_{k\ \text{copies}},\underbrace{g,\dots,g}_{s-k\ \text{copies}})=0.
\]
Then the binomial theorem yields
\begin{align*}
P(f+g)=P(f)+P(g)+\sum_{k=1}^{s-1}\binom{s}{k}\check{P}(\underbrace{f,\dots,f}_{k\ \text{copies}},\underbrace{g,\dots,g}_{s-k\ \text{copies}})=P(f)+P(g).
\end{align*}
Hence $P$ is positively orthogonally additive. Finally, from \cite[Theorem~2.3]{Sch2} we obtain that $\check{P}$ is orthogonally additive.
\end{proof}

We proceed to prove a similar result involving completely partitioned weighted geometric means. The following lemma is required. Its proof relies on \cite[Theorem~3.7]{BusSch} and \cite[Corollary~2.7]{BusdPvR} and is similar to Step 1 of the proof of Theorem~\ref{T:HM}. It is therefore left to the reader.

\begin{lemma}\label{L:geos}
Let $p,s\in\mathbb{N}\setminus\{1\}$ with $p\leq s$. Suppose $E$ is a uniformly complete vector lattice, and let $\gamma_{r_1/s,...,r_p/s}$ be a completely partitioned weighted geometric mean. Then
\[
\gamma_{r_1/s,...,r_p/s}(f_{1},\dots,f_{p})=\mathfrak{G}_s(\underbrace{f_1,\dots,f_1}_{r_1\ \text{copies}},\underbrace{f_2,\dots,f_2}_{r_2\ \text{copies}},\dots,\underbrace{f_p,\dots,f_p}_{r_p\ \text{copies}})
\]
holds for all $f_1,...,f_p\in E_+$.
\end{lemma}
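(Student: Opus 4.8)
The plan is to mirror Step~1 of the proof of Theorem~\ref{T:HM}: reduce the desired identity to the corresponding scalar identity on $\R_+$ and then transfer it to $E^u$ using the fact that nonzero real-valued multiplicative vector lattice homomorphisms separate points. First I would record the scalar version. Fix $f_1,\dots,f_p\in E_+$. For nonnegative reals $x_1,\dots,x_p$, since $\sum_{k=1}^p r_k=s$ we have
\[
\gamma_{r_1/s,\dots,r_p/s}(x_1,\dots,x_p)=\prod_{k=1}^p x_k^{r_k/s}=\Bigl(\prod_{k=1}^p x_k^{r_k}\Bigr)^{1/s}=\mathfrak{G}_s\bigl(\underbrace{x_1,\dots,x_1}_{r_1},\dots,\underbrace{x_p,\dots,x_p}_{r_p}\bigr),
\]
so the two classical means coincide as functions on $\R_+^p$. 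Note that only $r_k\in\N$ and $\sum_k r_k=s$ are used here; the full complete-partition hypothesis is not needed for this step.

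Next I would transfer this identity to $E^u$. Let $C$ be the Archimedean $f$-subalgebra of $E^u$ generated by
\[
\Bigl\{f_1,\dots,f_p,\ \gamma_{r_1/s,\dots,r_p/s}(f_1,\dots,f_p),\ \mathfrak{G}_s\bigl(\underbrace{f_1,\dots,f_1}_{r_1},\dots,\underbrace{f_p,\dots,f_p}_{r_p}\bigr)\Bigr\}.
\]
For any nonzero multiplicative vector lattice homomorphism $\omega\colon C\to\R$, \cite[Theorem~3.7]{BusSch} allows $\omega$ to be pushed through the functional-calculus means, exactly as in the second and fourth equalities of Step~1, giving $\omega\bigl(\gamma_{r_1/s,\dots,r_p/s}(f_1,\dots,f_p)\bigr)=\gamma_{r_1/s,\dots,r_p/s}\bigl(\omega(f_1),\dots,\omega(f_p)\bigr)$ and, likewise, $\omega$ applied to the $\mathfrak{G}_s$-term equals $\mathfrak{G}_s$ evaluated at the tuple in which each $\omega(f_k)$ appears $r_k$ times. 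Applying the scalar identity above to the nonnegative reals $\omega(f_1),\dots,\omega(f_p)$ then yields
\[
\omega\bigl(\gamma_{r_1/s,\dots,r_p/s}(f_1,\dots,f_p)\bigr)=\omega\Bigl(\mathfrak{G}_s\bigl(\underbrace{f_1,\dots,f_1}_{r_1},\dots,\underbrace{f_p,\dots,f_p}_{r_p}\bigr)\Bigr).
\]

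Finally, since by \cite[Corollary~2.7]{BusdPvR} the nonzero multiplicative vector lattice homomorphisms $\omega\colon C\to\R$ separate the points of $C$, the two elements displayed above must be equal, which is precisely the claimed identity. The only delicate point is to confirm that both functional-calculus means lie in the generated $f$-algebra $C$ and that $\omega$ genuinely intertwines with the functional calculus in the way used; but this is exactly the mechanism of \cite[Theorem~3.7]{BusSch} already exploited in Step~1, so no new obstacle arises, and once the scalar identity is in hand the argument is routine — which is why it is left to the reader.
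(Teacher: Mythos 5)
Your proposal is correct and follows essentially the same route the paper intends: reduce to the scalar identity $\prod_{k}x_k^{r_k/s}=\mathfrak{G}_s(\underbrace{x_1,\dots,x_1}_{r_1},\dots,\underbrace{x_p,\dots,x_p}_{r_p})$ on $\mathbb{R}_+^p$, then transfer it to $E^u$ by applying nonzero multiplicative vector lattice homomorphisms on the generated $f$-subalgebra and invoking \cite[Theorem~3.7]{BusSch} together with the point-separation result \cite[Corollary~2.7]{BusdPvR}, exactly as in Step~1 of the proof of Theorem~\ref{T:HM}, which is precisely the argument the paper says it leaves to the reader. Your observation that only $r_k\in\mathbb{N}$ and $\sum_{k=1}^p r_k=s$ are needed here (the full complete-partition hypothesis only enters later, in the converse direction of Theorem~\ref{T:WGM}) is also accurate.
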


We proceed to our main result involving completely partitioned weighted geometric means.

\begin{theorem}\label{T:WGM}
	Let $p,s\in\mathbb{N}\setminus\{1\}$ with $p\leq s$. Suppose $E$ is a uniformly complete vector lattice, $Y$ is a convex bornological space, and $P\colon E\to Y$ is a bounded $s$-homogeneous polynomial. Let $\gamma_{r_1/s,...,r_p/s}$ be a completely partitioned weighted geometric mean. Then $P$ is orthogonally additive if and only if
	\[
	P\Bigl(\gamma_{r_1/s,...,r_p/s}(f_{1},\dots,f_{p})\Bigr)=\check{P}(\underbrace{f_1,\dots,f_1}_{r_1\ \text{copies}},\underbrace{f_2,\dots,f_2}_{r_2\ \text{copies}},\dots,\underbrace{f_p,\dots,f_p}_{r_p\ \text{copies}})
	\]
	holds for  every $f_{1},\dots,f_{p}\in E_{+}$.
\end{theorem}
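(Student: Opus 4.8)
The plan is to lean on the two tools already assembled: Lemma~\ref{L:geos}, which rewrites any completely partitioned weighted geometric mean as an ordinary $s$th geometric mean with repeated arguments, and the Kusraeva identity \eqref{eq:GM} together with its converse \cite[Theorem~2.3]{Sch2}. The forward implication is then immediate. Assuming $P$ is orthogonally additive, \eqref{eq:GM} applies, and feeding in the $s$ arguments $\underbrace{f_1,\dots,f_1}_{r_1\ \text{copies}},\dots,\underbrace{f_p,\dots,f_p}_{r_p\ \text{copies}}$ yields
\[
P\Bigl(\mathfrak{G}_s(\underbrace{f_1,\dots,f_1}_{r_1\ \text{copies}},\dots,\underbrace{f_p,\dots,f_p}_{r_p\ \text{copies}})\Bigr)=\check{P}(\underbrace{f_1,\dots,f_1}_{r_1\ \text{copies}},\dots,\underbrace{f_p,\dots,f_p}_{r_p\ \text{copies}}).
\]
Rewriting the left-hand side via Lemma~\ref{L:geos} turns the geometric mean into $\gamma_{r_1/s,\dots,r_p/s}(f_1,\dots,f_p)$, which is exactly the claimed formula.

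For the converse I would mirror Step~5 of the proof of Theorem~\ref{T:HM}. Assuming the displayed identity for all $f_1,\dots,f_p\in E_+$, the target is to show $P$ is positively orthogonally additive and then invoke \cite[Theorem~2.3]{Sch2}. So I fix disjoint $f,g\in E_+$ and aim to prove that every mixed monomial
\[
\check{P}(\underbrace{f,\dots,f}_{k\ \text{copies}},\underbrace{g,\dots,g}_{s-k\ \text{copies}})=0\qquad(k=1,\dots,s-1)
\]
vanishes; the binomial expansion of $P(f+g)=\check P(f+g,\dots,f+g)$ then collapses to $P(f)+P(g)$ exactly as in Step~5.

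The mechanism producing this vanishing is where the \emph{completely partitioned} hypothesis does its work. Given $k\in\{1,\dots,s-1\}$, condition (iii) supplies a subset $I\subseteq\{1,\dots,p\}$ with $\sum_{i\in I}r_i=k$, and since $1\le k\le s-1$ this $I$ is both nonempty and proper. Setting $f_i=f$ for $i\in I$ and $f_i=g$ otherwise, the right-hand side of the hypothesis becomes, by symmetry of $\check P$, precisely $\check P(\underbrace{f,\dots,f}_{k\ \text{copies}},\underbrace{g,\dots,g}_{s-k\ \text{copies}})$. On the left-hand side, the argument $\gamma_{r_1/s,\dots,r_p/s}(f_1,\dots,f_p)$ equals $0$: by Lemma~\ref{L:geos} it is an $s$th geometric mean of a family containing at least one copy of $f$ and at least one of $g$, and passing through the separating nonzero multiplicative vector lattice homomorphisms $\omega$ of \cite[Corollary~2.7]{BusdPvR}, exactly as in Step~1 above, shows it vanishes, since $\omega(f)\wedge\omega(g)=0$ forces a zero factor in each product. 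Hence $P(0)=0$ equals the monomial, giving its vanishing.

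I expect the sole genuine obstacle to be this last vanishing step, and it is precisely here that the completely partitioned property is indispensable: without it one could not realize every intermediate exponent $k\in\{1,\dots,s-1\}$ as a subset-sum of the $r_i$ using a proper nonempty $I$, and so could not force all the cross terms in the binomial expansion to disappear. Everything else is bookkeeping plus the cited converse \cite[Theorem~2.3]{Sch2} upgrading positive orthogonal additivity to orthogonal additivity.
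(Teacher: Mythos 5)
Your proof is correct, and its overall skeleton is the paper's: the forward direction is exactly the paper's argument (Lemma~\ref{L:geos} plus the identity \eqref{eq:GM}), and the converse follows the same outline --- use the complete-partition property to write each exponent $q\in\{1,\dots,s-1\}$ as a subset sum of the $r_i$, use symmetry of $\check{P}$ to identify each mixed monomial with $P$ evaluated at a weighted geometric mean of a mixed $f,g$ family, show those means vanish, collapse the binomial expansion, and finish with \cite[Theorem~2.3]{Sch2}. The one point where you genuinely diverge is the mechanism for the vanishing of $\gamma_{r_1/s,\dots,r_p/s}$ at mixed arguments: you route through Lemma~\ref{L:geos} and then argue via the separating multiplicative vector lattice homomorphisms of \cite[Corollary~2.7]{BusdPvR}, as in Step 1 of Theorem~\ref{T:HM}, reducing everything to the scalar fact that a geometric mean with a zero factor is zero; this is sound, granted the same commutation of such homomorphisms with the functional-calculus mean that the paper itself invokes in Step 1 and in the (omitted) proof of Lemma~\ref{L:geos}. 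The paper instead stays inside $E$ and uses the tangent/infimum formula of \cite[Theorem~3.7]{BusSch}: a lower bound $a$ of the defining set lies in $I_f\oplus I_g$, and driving the weights $\theta_k$ attached to $f$ to $0$ while compensating on the $g$ side (to keep $\prod_{k}\theta_k^{r_k/s}=1$) forces $a\leq 0$. Your route buys uniformity with the harmonic-mean proof (it is the exact analogue of Corollary~\ref{C:ortho}) at the cost of another pass through $E^u$; the paper's route avoids $E^u$ for this step but leans on the disjointness-driven ideal decomposition. One small correction of emphasis: in your closing paragraph you locate the indispensability of the completely partitioned hypothesis at the vanishing step, but that vanishing holds for \emph{any} weighted geometric mean of a family containing both $f$ and $g$; as your own construction of the set $I$ shows, the hypothesis is needed only to realize every $k\in\{1,\dots,s-1\}$ as a subset sum of the $r_i$, so that all cross terms in the binomial expansion are actually reached.
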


\begin{proof}
To prove the first implication, suppose that $P$ is orthogonally additive. Assume that $f_1,\dots f_s\in E_+$. By Lemma~\ref{L:geos} and the main result of \cite{Kusa} (see also \cite[Theorems2.3\&2.4]{BusSch4} and \cite[Theorem 2.3]{Sch2}), we have
\begin{align*}
	P\Bigl(\gamma_{r_1/s,...,r_p/s}(f_{1},\dots,f_{p})\Bigr)&=P\Bigl(\mathfrak{G}_s(\underbrace{f_1,\dots,f_1}_{r_1\ \text{copies}},\underbrace{f_2,\dots,f_2}_{r_2\ \text{copies}},\dots,\underbrace{f_p,\dots,f_p}_{r_p\ \text{copies}})\Bigr)\\
	&=\check{P}(\underbrace{f_1,\dots,f_1}_{r_1\ \text{copies}},\underbrace{f_2,\dots,f_2}_{r_2\ \text{copies}},\dots,\underbrace{f_p,\dots,f_p}_{r_p\ \text{copies}}).
\end{align*}
Thus the first implication of the theorem holds.

To prove the second implication, suppose that
\[
P\Bigl(\gamma_{r_1/s,...,r_p/s}(f_{1},\dots,f_{p})\Bigr)=\check{P}(\underbrace{f_1,\dots,f_1}_{r_1\ \text{copies}},\underbrace{f_2,\dots,f_2}_{r_2\ \text{copies}},\dots,\underbrace{f_p,\dots,f_p}_{r_p\ \text{copies}})
\]
	holds for  every $f_{1},\dots,f_{p}\in E_{+}$. 	Let $f,g\in E_+$ be disjoint. From the binomial theorem we have
		\begin{align*}
		P(f+g)&=P(f)+P(g)+\sum_{q=1}^{s-1}\binom{s}{q}\check{P}(\underbrace{f,\dots,f}_{q\ \text{copies}},\underbrace{g,\dots,g}_{s-q\ \text{copies}}).
	\end{align*}
Next put $j\in\{1,\dots,p-1\}$. By \cite[Theorem~3.7]{BusSch}, we have
	\[
	\gamma_{r_1/s,...,r_p/s}(\underbrace{f,\dots,f}_{j\ \text{copies}},\underbrace{g,\dots,g}_{p-j\ \text{copies}})=\frac{1}{s}\inf\Bigl\{\sum\limits_{k=1}^{j}r_{k}\theta_{k}f+\sum\limits_{k=j+1}^{p}r_{k}\theta_{k}g:\theta_{k}\in(0,\infty),\ \prod\limits_{k=1}^{p}\theta_{k}^{r_{k}/s}=1\Bigr\}.
	\]
	Note that $\gamma_{r_1/s,...,r_p/s}(\underbrace{f,\dots,f}_{j\ \text{copies}},\underbrace{g,\dots,g}_{p-j\ \text{copies}})\geq 0$ follows from the positivity of $f$ and $g$. Suppose next that
	\[
	a\leq\sum_{k=1}^{j}r_k\theta_kf+\sum_{k=j+1}^{p}r_k\theta_kg
	\]
	for all $\theta_1,\dots,\theta_p\in(0,\infty)$ such that $\prod_{k=1}^{p}\theta_k^{r_k/s}=1$. Then $a=a_1+a_2\in I_f\oplus I_g$, where $I_f$ and $I_g$ are the principal ideals generated by $f$ and $g$, respectively. Then
	\[
	a_1\leq\sum_{k=1}^{j}r_k\theta_kf\quad\ \text{and}\quad a_2\leq\sum_{k=j+1}^{p}r_k\theta_kg
	\]
	both hold for all $\theta_1,\dots,\theta_p\in(0,\infty)$ with $\prod_{k=1}^{s}\theta_k^{r_k/s}=1$. Thus $a\leq 0$ and we obtain that $\gamma_{r_1/s,...,r_p/s}(\underbrace{f,\dots,f}_{j\ \text{copies}},\underbrace{g,\dots,g}_{p-j\ \text{copies}})=0$ for all $j\in\{1,\dots,p-1\}$.
	
	Next let $q\in\{1,\dots,s-1\}$ be arbitrary. Since $\gamma_{r_1/s,...,r_p/s}$ is a completely partitioned weighted geometric mean, there exist $\alpha_{k}\in\{0,1\}\ \ (k=1,\dots,p)$ such that
	\[
	q=\sum_{k=1}^{p}\alpha_{k}r_k.
	\]
	For $\alpha\in\{0,1\}$ define $\bar{\alpha}=\alpha+1\ (\hspace{-2.5mm}\mod 2)$. Then we have
	\[
	s-q=\sum_{k=1}^{p}\bar{\alpha}_{k}r_k.
	\]
	Since $\check{P}$ is symmetric, we can without loss of generality suppose that
	\[
	\sum_{k=1}^{j}r_k=q
	\]
	and
	\[
	\sum_{k=j+1}^{p}r_k=s-q,
	\]
	for some $j\in\{1,\dots,p-1\}$. We then by assumption obtain
	\begin{align*}
		\check{P}(\underbrace{f,\dots,f}_{q\ \text{copies}},\underbrace{g,\dots,g}_{s-q\ \text{copies}})&=\check{P}(\underbrace{f,\dots,f}_{r_1\ \text{copies}},\underbrace{f,\dots,f}_{r_2\ \text{copies}},\dots,\underbrace{f,\dots,f}_{r_j\ \text{copies}},\underbrace{g,\dots,g}_{r_{j+1}\ \text{copies}},\dots,\underbrace{g,\dots,g}_{r_p\ \text{copies}})\\
		&=P\Bigl(\gamma_{r_1/s,...,r_p/s}(\underbrace{f,\dots,f}_{j\ \text{copies}},\underbrace{g,\dots,g}_{p-j\ \text{copies}})\Bigr)\\
		&=0.
	\end{align*}
Since $q\in\{1,\dots,s-1\}$ was arbitrary, we have
\begin{align*}
	P(f+g)&=P(f)+P(g)+\sum_{q=1}^{s-1}\binom{s}{q}\check{P}(\underbrace{f,\dots,f}_{q\ \text{copies}},\underbrace{g,\dots,g}_{s-q\ \text{copies}})\\
	&=P(f)+P(g).
\end{align*}

Hence $P$ is positively orthogonally additive. Finally, it follows from \cite[Theorem~2.3]{Sch2} that $\check{P}$ is orthogonally additive.
\end{proof}

\bibliography{mybib}
\bibliographystyle{amsplain}

\end{document}